\documentclass[12pt]{amsart}
\makeatletter
\@namedef{subjclassname@2020}{\textup{2020} Mathematics Subject Classification}
\makeatother

\usepackage{amssymb}
\usepackage{graphicx}
\usepackage{enumerate}

\newtheorem{theorem}{Theorem}[section]
\newtheorem{lemma}[theorem]{Lemma}
\newtheorem{proposition}[theorem]{Proposition}
\newtheorem{corollary}[theorem]{Corollary}

\theoremstyle{definition}
\newtheorem{definition}[theorem]{Definition}
\newtheorem{example}[theorem]{Example}

\theoremstyle{remark}
\newtheorem{remark}[theorem]{Remark}

\numberwithin{equation}{section}

\newcommand{\ph}{\varphi}
\newcommand{\eps}{\varepsilon}
\newcommand{\R}{\mathbb{R}}
\newcommand{\Z}{\mathbb{Z}}
\newcommand{\N}{\mathbb{N}}

\newcommand{\ls}{\underline{\sigma}}
\newcommand{\us}{\overline{\sigma}}
\newcommand{\lt}{\underline{\tau}}
\newcommand{\ut}{\overline{\tau}}
\newcommand{\oB}{\overline{B}}

\newcommand{\Rak}{\mathbb{R}_{(\alpha, K)}}
\newcommand{\Rakp}{\mathbb{R}_{(\alpha, K')}}

\newcommand{\ee}{\hfill$\Diamond$\end{example}}

\newcommand{\ak}{$(\alpha,K)$}
\newcommand{\akp}{$(\alpha,K')$}
\newcommand{\akch}{$(\alpha,K)$-chained}
\newcommand{\akpch}{$(\alpha,K')$-chained}
\newcommand{\akcomp}{$(\alpha,K)$-component}
\newcommand{\akconn}{$(\alpha,K)$-connected}
\newcommand{\akpconn}{$(\alpha,K')$-connected}
\newcommand{\aconn}{$\alpha$-connected}
\newcommand{\akray}{$(\alpha,K)$-ray}
\newcommand{\akpray}{$(\alpha,K')$-ray}
\newcommand{\akqray}{$(\alpha,K)$-quasi-ray}

\newcommand{\akend}{$(\alpha,K)$-end}
\newcommand{\akpend}{$(\alpha,K')$-end}
\newcommand{\aend}{$\alpha$-end}

\newcommand{\calek}{{\mathcal E}^\alpha_K}
\newcommand{\calekp}{{\mathcal E^\alpha}_{K'}}
\newcommand{\caleinf}{{\mathcal E}^\alpha_\infty}

\begin{document}

\title{Sparse metric spaces and sparse ends}

\author{William Geller}
\address{Department of Mathematical Sciences, IU Indianapolis, 402 N. Blackford
  Street, Indianapolis, IN 46202}
\email{wgeller@iu.edu}

\author{Micha{\l} Misiurewicz}
\address{Department of Mathematical Sciences, IU Indianapolis, 402 N. Blackford
  Street, Indianapolis, IN 46202}
\email{mmisiure@iu.edu}

\subjclass[2010]{Primary 51F30; secondary 53C23, 11A41, 11B05, 46L05, 37B10}

\date{June 9, 2026}

\begin{abstract}
We study metric spaces that in some sense thin out at infinity. We
define and investigate a measure of sparsity that is a quasi-isometry
invariant, and introduce an analogue of topological ends for sparse
spaces that is also invariant under quasi-isometries. We study some
examples arising in various contexts.
\end{abstract}

\maketitle

\section{Introduction}\label{intro}

We study metric spaces that in some sense thin out at infinity. While
our motivation comes from the search for invariants in the theory of
coarse dynamical systems (\cite{GM1}), such spaces arise in diverse areas,
including the study of Roe algebras (\cite{BCL}, \cite{BFV}, \cite{CMS}, \cite{WW}),
substitutions and other symbolic dynamical systems (\cite{LM}), and number
theory (\cite{M}, \cite{Z}).

Our primary goals here are to define and analyze some closely related
notions of thinning out at infinity, give a measure of sparsity that
is a quasi-isometry invariant, and define an analogue of topological
ends for sparse spaces that is also invariant under quasi-isometries.

Protasov \cite{P} has previously studied sparse metric spaces from a
different point of view. Although his definition of sparse spaces
differs from ours, his unbounded thin spaces are easily seen to be
equivalent to our sparse spaces, and his coarsely thin spaces coincide
with our quasi-sparse spaces. In particular, our Proposition~\ref{qs}
follows from his Theorem~5, and our Proposition~\ref{qs-ad0} from his
Theorem~1 and the remarks before it.

In Section 2, we define sparse metric spaces and the related notions
of quasi-sparse and finitely sparse spaces, introduce the order of
sparsity and observe that it is invariant under quasi-isometries,
study the relation of quasi-sparsity to asymptotic dimension, and
present a number of examples and connections to recent work in other
fields.

Section 3 draws a connection between sparsity of a metric space and
the notion of tryposity at infinity, which involves the degree to
which pairs of points in large balls fail to have midpoints. The subsequent
sections do not depend on it.

In Section 4 we define for each $\alpha \ge 0$ the space of \aend s of
a metric space, and show that a quasi-isometry of metric spaces
induces a homeomorphism on their spaces of \aend s. When $\alpha = 0$
this recovers the coarse ends studied in \cite{AC}, while for $\alpha > 0$
this gives a useful notion of ends for sparse spaces.

In Section 5 we give a number of examples of metric spaces and their
spaces of \aend s.

We thank Slawek Klimek and Jon Keating for useful suggestions and
references.

\section{Sparse metric spaces}\label{spaces}

Let $(X,d)$ be an unbounded metric space. Fix a base point $x_0\in X$.
Define a function $s:(0,\infty)\to\R$ as follows:
\begin{equation*}
s(R)=\inf\{d(x,y):x,y\in X\setminus B(x_0,R), x\ne y\},
\end{equation*}
where $B(x_0,R)$ is the open ball of radius $R$, centered at $x_0$.
Clearly, the function $s$ is nondecreasing. We will assume that
$\lim_{r\to\infty}s(R)=\infty$ and will call such spaces \emph{sparse
  at infinity}, or just {\em sparse}. This property does not depend on
the choice of $x_0$, since for $x_1,x_2\in X$ we have $B(x_2,R)\subset
B(x_1,R+d(x_1,x_2))$.

Now we define the \emph{lower and upper sparsity order at infinity}
($\ls$ and $\us$) as
\begin{equation*}
\ls(X)=\varliminf_{R\to\infty}\frac{\log s(R)}{\log R},\ \ \ \
\us(X)=\varlimsup_{R\to\infty}\frac{\log s(R)}{\log R}.
\end{equation*}
If $\ls(X)=\us(X)$, we will speak simply of the \emph{sparsity order
  at infinity} or just the \emph{sparsity order} and denote it
$\sigma(X)$.

These similarly do not depend on the choice of $x_0$.

Let us give some examples. In these examples
$X=\{0\}\cup\{a_n:n=1,2,3,\dots\}\subset\R$.

\begin{example}\label{n^q}
For each $q>1$, define $X_q$ by taking $a_n=n^q.$ In order to compute
$\ls(X_q)$ we take $R=n^q$:
\begin{equation*}
\ls(X_q)=\lim_{n\to\infty}\frac{\log((n+1)^q-n^q)}{\log n^q}.
\end{equation*}
We have
\begin{equation*}
\lim_{x\to\infty}\frac{(x+1)^q-x^q}{x^{q-1}}=
\lim_{x\to\infty}\frac{(1+1/x)^q-1}{1/x}=\lim_{y\to0^+}\frac{(1+y)^q-1}{y}=
q.
\end{equation*}
Therefore
\begin{equation*}
\ls(X_q)=\lim_{n\to\infty}\frac{\log(qn^{q-1})}{\log n^q}=\frac{q-1}q.
\end{equation*}
In order to compute $\us(X_q)$ we take $R=n^q+\eps$ and then the limit as
$\eps\to0$:
\begin{equation*}
\us(X_q)=\lim_{n\to\infty}\frac{\log((n+2)^q-(n+1)^q)}{\log n^q}.
\end{equation*}
Similarly as before, we get
\begin{equation*}
\us(X_q)=\lim_{n\to\infty}\frac{\log(q(n+1)^{q-1})}{\log n^q}=\frac{q-1}q.
\end{equation*}
Thus, $\sigma(X_q)=1-1/q$.
\ee

\begin{example}\label{2^n}
Now we consider the case of $a_n=2^n$. We get
\begin{equation*}
\ls(X)=\lim_{n\to\infty}\frac{\log(2^{n+1}-2^n)}{\log 2^n}=1
\end{equation*}
and similarly,
\begin{equation*}
\us(X)=\lim_{n\to\infty}\frac{\log(2^{n+2}-2^{n+1})}{\log 2^n}=1.
\end{equation*}
Thus, $\sigma(X)=1$.
\ee

\begin{example}\label{nlogn}
This example is a sparse space with sparsity order $\sigma = 0.$

Let $a_n = n \log n.$ To compute $\us(X),$ we take $R= n \log n +
\eps$ and then as before the limit as $\eps \to 0$. Note that
\begin{equation*}
\lim_{x\to\infty}\frac{(x+1)\log(x+1) - x \log x} {\log x} =
\lim_{x\to\infty}\frac{\log(1+1/x)}{1/x}=\lim_{y\to0^+}\frac{\log(1+y)}{y}= 1.
\end{equation*}
Therefore
\begin{equation*}
\us(X) = \lim_{n\to\infty}\frac{\log[(n+2)\log(n+2) - (n+1) \log
    (n+1)]} {\log(n\log n)} =
\lim_{n\to\infty}\frac{\log\log(n+1)}{\log (n \log n)} = 0.
\end{equation*}
Thus, $\sigma(X)=\us(X)=0.$
\ee

\begin{example}
Take $a_n=2^{b^n}$ for some $b>1$. We have
\begin{equation*}
\ls(X)=\lim_{n\to\infty}\frac{\log(2^{b^{n+1}}-2^{b^n})}{\log
  2^{b^n}}=\lim_{n\to\infty}\frac{b^{n+1}}{b^n}=b.
\end{equation*}
Similarly,
\begin{equation*}
\us(X)=\lim_{n\to\infty}\frac{\log(2^{b^{n+2}}-2^{b^{n+1}})}{\log
  2^{b^n}}=\lim_{n\to\infty}\frac{b^{n+2}}{b^n}=b^2.
\end{equation*}
\ee

It is easy to see that similar computations for the triple exponential
for $a_n$ give $\sigma(X)=\infty$.

Sparsity order at infinity is invariant under quasi-isometry. A map
$\ph: X \to Y$ between metric spaces is called a {\em
  $(C,A)$-quasi-isometric embedding} if for every $x, x' \in X$
$$C^{-1} d(x,x')-A \leq d(\ph(x),\ph(x')) \leq C d(x,x')+A.$$

If in addition $Y$ is contained in an $A$-neighborhood of $\ph(X)$,
i.e. for every $y\in Y$ there is $x\in X$ with $d(y, \ph(x)) \leq A,$
$\ph$ is called a {\em $(C,A)$-quasi-isometry,} in which case $X$ and
$Y$ are said to be {\em quasi-isometric.}

More generally, a map $\ph: X \to Y$ between metric spaces is called a
{\em coarse embedding} if the affine maps in the inequality are
replaced by arbitrary unbounded nondecreasing maps
$\rho_-,\rho_+:[0,\infty) \to [0,\infty)$ respectively. Such a $\ph$
    is a {\em coarse equivalence} if in addition $Y$ is contained in
    some $A$-neighborhood of $\ph(X)$.

Note that the quasi-isometric image of a sparse metric space is
sparse, and more generally the image by a coarse embedding of a sparse
metric space is sparse.

\begin{proposition}\label{qi-invar}
If metric spaces $X$ and $Y$ are sparse at infinity and
quasi-isometric and $\sigma(X)$ exists, then $\sigma(Y)$ exists and
$\sigma(Y)=\sigma(X)$.
\end{proposition}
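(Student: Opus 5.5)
Let $\ph:X\to Y$ be a $(C,A)$-quasi-isometry, $x_0\in X$ a base point, and $y_0=\ph(x_0)$ the base point of $Y$ (allowed, since the sparsity orders do not depend on the base point). Write $s_X,s_Y$ for the two functions $s$. The plan is to find constants $c,c'>0$ and $R_0$ such that for large $R$
\begin{equation*}
s_X(c R)\ \le\ C\,s_Y(R)+A' \quad\text{and}\quad s_Y(c'R)\ \le\ C\,s_X(R)+A'
\end{equation*}
for some $A'$ (in practice $A'$ of order $CA$). Taking logarithms and dividing by $\log R$, these give $\ls(X)\le\ls(Y)$ and $\us(X)\le\us(Y)$, and symmetrically $\ls(Y)\le\ls(X)$ and $\us(Y)\le\us(X)$. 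Hence $\ls(Y)=\ls(X)=\us(X)=\us(Y)$. The reason this works is that $\log(cR)/\log R\to1$, and since $s\to\infty$, the additive constants and the factor $C$ disappear in $\log(Cs+A')/\log R$. The limit value $\infty$ causes no trouble, because every comparison is a monotone inequality.

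For the first inequality, take $y\ne y'$ in $Y\setminus B(y_0,R)$ with $d(y,y')$ close to $s_Y(R)$, and choose $x,x'$ with $d(\ph(x),y)\le A$ and $d(\ph(x'),y')\le A$. Then $d(\ph(x),y_0)\ge R-A$, so $C d(x,x_0)+A\ge R-A$, which gives $d(x,x_0)\ge (R-2A)/C\ge R/(2C)$ for large $R$. The same bound holds for $x'$. Next, $d(\ph(x),\ph(x'))\le d(y,y')+2A$, so $d(x,x')\le C(d(y,y')+3A)$.

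The main obstacle is making sure $x\ne x'$. I would handle it as follows. If $x=x'$, then $d(y,y')\le 2A$. But for large $R$ we have $s_Y(R)>2A+1$, since $Y$ is sparse, so this cannot happen. Thus $s_X(R/(2C))\le C(s_Y(R)+3A)+\text{small}$, and letting the "small" error go to $0$ gives the inequality.

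For the reverse direction, the argument is more direct. Take $x\ne x'$ outside $B(x_0,R)$ with $d(x,x')$ near $s_X(R)$. Their images satisfy $d(\ph(x),y_0)\ge R/C-A$. Also $d(\ph(x),\ph(x'))\ge s_X(R)/C-A$, which is positive for large $R$ because $X$ is sparse, so the images are distinct. Finally $d(\ph(x),\ph(x'))\le C\,d(x,x')+A$. Hence $s_Y(R/(2C))\le C s_X(R)+A$ for large $R$, which is the second inequality. Equivalently, one could apply the first argument to a quasi-inverse of $\ph$.
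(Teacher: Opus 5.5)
Your argument is correct and is essentially the paper's. Both establish, for large $R$, inequalities of the form $s_Y(R/D)\le D\,s_X(R)$ and $s_X(R/D')\le D'\,s_Y(R)$ (up to additive constants) and then pass to $\log s/\log R$. The paper absorbs the additive constant by noting that $\ph$ is $(C+1)$-bi-Lipschitz at large distances, and it gets the second inequality from a quasi-inverse. You instead keep the additive constants and pull near-optimal pairs back directly via coarse density, correctly checking that the lifts are distinct. Your write-up also shows explicitly that $\ls$ and $\us$ are each invariant under quasi-isometry, which is what the later Remark on the lower and upper sparsity orders of quasi-sparse spaces actually needs.
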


\begin{proof}
Let $\ph: X \to Y$ be a $(C,A)$-quasi-isometry. Then for $d(x,x')$
sufficiently large, $\ph$ is $D:=C+1$ bi-Lipschitz:
$$D^{-1} d(x,x') \leq d(\ph(x),\ph(x')) \leq D d(x,x').$$

Then for sufficiently large $R,$
\begin{equation*}
s_Y (R/D) \leq D s_X (R)
\end{equation*}
so that
\begin{equation*}\varlimsup_{R\to \infty} s_Y (R/D) \leq
  \varlimsup_{R\to \infty} D s_X (R)
\end{equation*}
and so
\begin{equation*}
\us(Y) \leq \sigma(X).
\end{equation*}

Similarly, for $D'$ a large scale bi-Lipschitz constant for an inverse
quasi-isometry $\ph',$
\begin{equation*}
s_X (R/D') \leq D' s_Y (R)
\end{equation*}
so that
\begin{equation*}
\varliminf_{R\to \infty} s_X (R/D') \leq \varliminf_{R\to \infty} D'
s_Y (R)
\end{equation*}
and so
\begin{equation*}
\sigma(X) \leq \ls(Y).
\end{equation*}

So
\begin{equation*}
\ls(Y)=\us(Y)=\sigma(Y)=\sigma(X).
\end{equation*}
\end{proof}

In contrast to this result, note that coarse equivalence of sparse
metric spaces does not preserve sparsity order. For example, the
subspaces of the halfline $X_2=\{0\} \cup \{n^2 | n\in \mathbb{N}\}$
and $X_3=\{0\}\cup \{n^3 | n\in \mathbb{N}\}$ are coarsely equivalent,
for example with $\rho_-(x)=x$ and $\rho_+(x)=x^2$, while
$\sigma(X_2)=1/2$ and $\sigma(X_3)=2/3$.

We want to extend the sparsity order at infinity to spaces that are
quasi-isometric to spaces that are sparse at infinity. If a metric
space $X$ has a sparse subspace $X'$ such that $X$ is contained in
some $A$-neighborhood of $X'$, we call the subspace $X'$ a {\em sparse
  skeleton} of the space $X$. Clearly, any two sparse skeletons of a
metric space are quasi-isometric.

\begin{proposition}\label{qs}
{\rm (cf.~\cite{P}).}
The following are equivalent for a metric space $X:$
\item[(1)] $X$ is coarsely equivalent to a sparse space.
\item[(2)] $X$ is quasi-isometric to a sparse space.
\item[(3)] $X$ has a sparse skeleton.
\end{proposition}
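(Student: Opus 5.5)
We prove the cycle $(3)\Rightarrow(2)\Rightarrow(1)\Rightarrow(3)$.

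\emph{$(3)\Rightarrow(2)$.} Let $X'\subset X$ be a sparse skeleton with $X\subset N_A(X')$. The inclusion $X'\hookrightarrow X$ is an isometric embedding whose image is $A$-dense, so it is a $(1,A)$-quasi-isometry.

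\emph{$(2)\Rightarrow(1)$.} Every quasi-isometry is a coarse equivalence: take $\rho_-(t)=\max(C^{-1}t-A,0)$ and $\rho_+(t)=Ct+A$.

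\emph{$(1)\Rightarrow(3)$.} This is the main step. Let $\ph:X\to Y$ be a coarse equivalence with $Y$ sparse, so
$$\rho_-(d(x,x'))\le d(\ph x,\ph x')\le\rho_+(d(x,x')).$$
Our guess for the skeleton is the set of preimages of the points of $\ph(X)$.
\begin{itemize}
\item Choose one point $x_y\in\ph^{-1}(y)$ for each $y\in\ph(X)$, and put $X'=\{x_y\}$.
\item \emph{$X'$ is coarsely dense.} If $\ph(x)=\ph(x_y)$, then $\rho_-(d(x,x_y))=0$. Since $\rho_-$ is unbounded and nondecreasing, this forces $d(x,x_y)\le B:=\sup\{t:\rho_-(t)=0\}<\infty$. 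Hence $X\subset N_B(X')$.
\item \emph{$X'$ is sparse.} Fix $x_0\in X'$ and let $x\ne x'$ in $X'$ lie far from $x_0$. Then $\ph x\ne\ph x'$ by construction, and
$$d(\ph x,\ph x_0)\ge\rho_-(d(x,x_0))\to\infty,$$
so $\ph x$ and $\ph x'$ lie outside large balls around $\ph x_0$. Sparsity of $Y$ therefore makes $d(\ph x,\ph x')$ large, and then
$$d(x,x')\ge\rho_+^{-1}\bigl(d(\ph x,\ph x')\bigr)$$
is large. Here $\rho_+^{-1}$ is a generalized inverse: if $d(x,x')\le t$ then $d(\ph x,\ph x')\le\rho_+(t)$, so bounded distances in $X$ give bounded distances in $Y$. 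Consequently $s_{X'}(R)\to\infty$.
\end{itemize}

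The obstacle is the case where $\ph$ collapses many points to a single point; we handle it by choosing one representative per image point, as above. A second point to check is that $\rho_-$ is unbounded. Unboundedness guarantees two things: the fibers of $\ph$ have bounded diameter, and points far from $x_0$ in $X$ are mapped far from $\ph x_0$ in $Y$.

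Finally, note that the hypothesis that $\ph(X)$ is coarsely dense in $Y$ is not needed for $(1)\Rightarrow(3)$. This is consistent with the earlier remark that the image of a sparse space under a coarse embedding is sparse.
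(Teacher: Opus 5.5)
Your proof is essentially correct, but your $(1)\Rightarrow(3)$ runs the coarse equivalence in the opposite direction from the paper.

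The paper takes $\ph\colon Y\to X$ out of the sparse space and simply sets $X'=\ph(Y)$. Coarse density of $X'$ in $X$ is then part of the definition of a coarse equivalence. Sparsity of $X'$ is the earlier remark that coarse embeddings carry sparse spaces to sparse spaces. So the paper needs no choices and no fiber estimates.

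You take $\ph\colon X\to Y$, so you must build the skeleton inside $X$ from one representative per fiber. This costs you extra steps: bounded fibers from the unboundedness of $\rho_-$, and pulling separation back through the finiteness of $\rho_+$. In exchange, it gives the slightly stronger fact that an unbounded space admitting a coarse embedding into a sparse space is already quasi-sparse.

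Two small repairs are needed.

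First, your cycle mixes directions. The steps $(3)\Rightarrow(2)\Rightarrow(1)$ produce a map $X'\to X$ out of the sparse space, while your $(1)\Rightarrow(3)$ assumes a map $X\to Y$. You are therefore tacitly using that coarse equivalence is symmetric. This is standard via a coarse inverse; alternatively, use the map $X\to X'$ sending each point to a skeleton point within distance $A$. The paper's choice of direction avoids the issue.

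Second, sparse spaces are unbounded by definition, and you never check that $X'$ is unbounded. This is exactly where the coarse density of $\ph(X)$ in $Y$ is used. Since $\ph(X)$ is unbounded, so is $X$, because $\rho_+$ is finite-valued; hence so is $X'$. Your closing remark should therefore say that coarse density may be replaced by the assumption that $X$ is unbounded, not dropped outright. For instance, a one-point space coarsely embeds in any $Y$, and its skeleton is not sparse.
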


\begin{proof}
Trivially (3)$\implies$(2)$\implies$(1). But also (1)$\implies$(3):
Let $\ph$ be a coarse equivalence from a sparse space $Y$ to $X.$ Then
$X':=\ph(Y)$ is a sparse subspace of $X$ and $X$ is contained in some
$A$-neighborhood of $X'.$
\end{proof}

\begin{definition}
We call a metric space satisfying the conditions of
Proposition~\ref{qs} {\em quasi-sparse}.
\end{definition}

Note that an unbounded subspace of a sparse metric space is sparse,
and similarly an unbounded subspace of a quasi-sparse space is
quasi-sparse.

\begin{remark}
If $X$ is quasi-sparse, in light of Proposition~\ref{qi-invar} we can
define its lower and upper sparsity order at infinity as that of any
of its sparse skeletons, and similarly for its sparsity order at
infinity.
\end{remark}

The following definition appears in \cite{BCL}, where it is called sparse
rather than finitely sparse and is used in connection with the
rigidity of Roe algebras; see also \cite{BFV} and \cite{CMS}.

\begin{definition}
Let $(X,d)$ be a metric space. It is {\em finitely sparse} if there
exists a partition $X = \bigsqcup_{n=1}^\infty X_n,\ $ such that the
cardinality of each $X_n$ is finite and $d(X_n, X_m) \rightarrow
\infty$ as $n+m \rightarrow \infty.$
\end{definition}

Clearly a finitely sparse metric space for which the $X_n$ can be
taken to be singletons must be sparse, and more generally one for
which they can be taken to be uniformly bounded in diameter must be
quasi-sparse.

In the other direction, a quasi-sparse locally finite metric space is
finitely sparse. (Recall that a locally finite metric space is one in
which every ball has finite cardinality.)

\begin{example}\label{2shift}
Consider the one-sided full shift on two symbols
\begin{equation*}
\Sigma_2 = \{0,1\}^\N.
\end{equation*}
We can view a point $x\in \Sigma_2$ as the subspace of the metric
space $\N$ determined by the locations of the 1's in $x.$ If $$\sigma:
\Sigma_2 \to \Sigma_2$$ is the left shift
\begin{equation*}
\sigma(x_1 x_2 x_3 \ldots) = (x_2 x_3 x_4 \ldots),
\end{equation*}
then $x$ is sparse (respectively quasi-sparse, finitely sparse) if and
only if every point in its $\sigma$-orbit is also sparse (respectively
quasi-sparse, finitely sparse).
\ee

\begin{example}\label{sparseshift}
Let $S \subset \Sigma_2$ be the subshift consisting of those sequences
in which the number of consecutive 0's preceding successive 1's is
strictly increasing. Then every $s \in S$ with infinitely many 1's is
sparse.
\ee

\begin{example}\label{cantor}
Consider the Cantor substitution $0 \longrightarrow 000, \ 1
\longrightarrow 101$, starting with 1. (Here the initial segment of
length $3^n$ shows which triadic subintervals of the unit interval are
present in the $n$-th stage of the construction of the Cantor set.)
The set of integers corresponding to the resulting sequence
$$101000101000000000101000101 \ldots$$
is finitely sparse but not quasi-sparse.
\ee

\begin{example}\label{sierpinski}
Consider the two-dimensional Sierpi\'{n}ski substitution taking 0 to a
3 by 3 block of 0's and 1 to a 3 by 3 block with 1's in the corners
and 0's elsewhere, starting with 1. The set of points in the positive
quadrant of the integer lattice in the plane corresponding to this
substitution is similarly finitely sparse but not quasi-sparse.
\ee

\begin{example}\label{primes}
Let $P$ be the set of prime numbers. Then Zhang's celebrated result on
bounded gaps between primes \cite{Z} shows that $P$ is not sparse. On the
other hand, the fact that there are successive primes with arbitrarily
long gaps between them implies that $P$ is finitely sparse. But as the
following proposition shows, it follows easily from results of Maynard
on the heels of Zhang that $P$ is not quasi-sparse.
\ee

\begin{proposition}\label{Pnotqs}
The primes are not quasi-sparse.
\end{proposition}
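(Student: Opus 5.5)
Suppose, for contradiction, that $P$ is quasi-sparse. By Proposition~\ref{qs} it then has a sparse skeleton $P'\subset P$ and a constant $A$ such that every prime lies within distance $A$ of some point of $P'$. Since $P'$ is sparse, we can choose $R_0$ so that $s_{P'}(R_0)>4A+1$. Then any two distinct points of $P'$ outside $B(0,R_0)$ are more than $4A+1$ apart.

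The input from number theory is Maynard's theorem, in the following form. For every $m$ there is a constant $H_m$ such that infinitely many intervals of length $H_m$ each contain at least $m$ primes. We will only use this with $H_m$ fixed and the intervals going off to infinity.

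The key step is to play the cluster of primes off against the separation in $P'$. Let $I=[N,N+H_m]$ be such an interval with $N>R_0+A$, so that $I$ contains at least $m$ primes. Each of these primes lies within $A$ of some point of $P'$, and every such point lies in the enlarged interval $[N-A,N+H_m+A]$, which is beyond $R_0$. These points of $P'$ are pairwise more than $4A+1$ apart, so there are at most $(H_m+2A)/(4A+1)+1$ of them. Each point of $P'$ has at most $2A+1$ integers within distance $A$ of it, and hence at most that many primes. So $I$ contains at most
\[
(2A+1)\Big(\frac{H_m+2A}{4A+1}+1\Big)
\]
primes.

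This bound does not lead to a contradiction as it stands, because $H_m$ grows with $m$. This is the main obstacle. The fix is to use the separation more sharply. A point of $P'$ beyond $R_0$ is the only point of $P'$ within distance $2A$ of itself. So the primes in $I$ fall into clusters, each of diameter at most $2A$, and distinct clusters are separated by gaps greater than $2A+1$.

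Now apply Maynard's theorem to an admissible tuple adapted to this structure: $m$ consecutive primes whose successive gaps are all bounded by a fixed constant $g$ with $g\le 2A$. If $g$ is small enough relative to $A$, then consecutive primes in the tuple, being at distance at most $g$, must lie in the same cluster. Hence all $m$ primes lie in one cluster, which contains at most $2A+1$ integers. Choosing $m>2A+1$ gives the contradiction.

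The delicate point is therefore to get from Maynard's results not merely $m$ primes in a bounded window, but $m$ primes with \emph{all consecutive gaps bounded independently of $m$}. If that stronger statement is not directly available, the fallback is the following. Maynard gives infinitely many $n$ for which at least $m$ of the values $n+h_i$ are prime, for an admissible set $\{h_1,\dots,h_k\}$ of our choosing. Choose the admissible set so that all consecutive differences $h_{i+1}-h_i$ are bounded, say by $C$. Then use a counting argument: among the prime values $n+h_i$ with $i$ in a block, some run of them must have small gaps, which forces more than $2A+1$ primes into a single cluster.

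Handling this combinatorial extraction carefully, so that the chain of primes genuinely stays inside one cluster, is the step I expect to require the most care.
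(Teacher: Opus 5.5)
There is a genuine gap: your argument is never completed, and the ``fix'' rests on a claim about primes that Maynard's work does not give. In the form you state it, $m$ consecutive primes whose successive gaps are bounded by a $g$ independent of $m$ (with $m$ arbitrarily large), the claim is actually false. By the Brun--Titchmarsh inequality in the Montgomery--Vaughan form $\pi(x+y)-\pi(x)\le 2y/\log y$, a run of $m$ consecutive primes with all gaps at most $g$ lies in an interval of length $(m-1)g$. This forces $m$ to be bounded in terms of $g$, roughly by $e^{2g}$.

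The instance you really need is more than $2A+1$ consecutive primes with all gaps at most $2A$, i.e.\ $m$ primes in a window of length below $m^2$. That is not known: Maynard's method only places $m$ primes in a window of length exponential in $m$, with no control on the individual gaps. Your fallback counting argument fails for the same reason. To guarantee $m$ primes among $n+h_1,\dots,n+h_k$, Maynard needs $k$ exponentially large in $m$. The $m$ prime values may then be spread over the whole tuple with gaps of order $kC/m$, so no pigeonhole argument produces a run of more than $2A+1$ primes with small gaps.

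The obstacle is self-inflicted. By fixing $R_0$ you froze the separation of $P'$ at $4A+1$ and discarded the fact that $s_{P'}(R)\to\infty$. Instead, fix a single $m>2A+1$, so that $H_m$ is a constant, and let the intervals $[N,N+H_m]$ containing $m$ primes go off to infinity.
\begin{itemize}
\item The $A$-neighbourhood of one point contains at most $2A+1$ integers, so these $m$ primes are not all attached to the same point of $P'$.
\item Hence $[N-A,N+H_m+A]$ contains two distinct points of $P'$.
\item These two points are at distance at most $H_m+2A$ from each other, but also at distance at least $s_{P'}(N-A)$, which tends to infinity. This is the contradiction.
\end{itemize}
Your first counting step already does exactly this once $4A+1$ is replaced by $s_{P'}(N-A)$.

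This is essentially the paper's proof. The paper takes $m>2a$ and consecutive primes $p_n,\dots,p_{n+m}$ with $p_{n+m}-p_n$ bounded along a subsequence, which is Maynard's $\liminf(p_{n+m}-p_n)<\infty$. Since $p_{n+m}-p_n\ge m>2a$, the two endpoints are attached to distinct skeleton points. Sparsity then pushes those skeleton points, and hence $p_n$ and $p_{n+m}$, arbitrarily far apart.
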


\begin{proof}
If $P$ were quasi-sparse, there would be a sparse skeleton
$\{p_{s_i}\}$ and an $a>0$ such that $p_{s_{i+1}} - p_{s_i}
\longrightarrow \infty,$ and for each $p \in P$ there would exist $s_i
=: s(p)$ with $|p - p_{s_i}| \leq a.$

Fix $m > 2a.$ Then by Maynard \cite{M}, $\liminf (p_{n+m} - p_n) <
\infty$, so there exists a sequence $\{n_i\}$ such that $(p_{n_i+m} -
p_{n_i}) \longrightarrow M < \infty.$

But $p_{n_i+m} - p_{n_i} \geq m > 2a,$ so $s'_i :=s(p_{n_i+m}) > s_i
:=s(p_{n_i}).$

So $p_{n_i+m} - p_{n_i} = p_{n_i+m} - p_{s'_i } + p_{s'_i } - p_{s_i }
+ p_{s_i } - p_{n_i} \geq p_{s'_i } - p_{s_i } - 2a \longrightarrow
\infty.$
\end{proof}

Gromov \cite{G} introduced the notion of asymptotic dimension
(see also~\cite{BD})
originally as a geometric invariant for finitely generated groups.

\begin{definition}
Let $(X,d)$ be a metric space. $X$ has {\em asymptotic dimension zero}
if for every $r > 0$, there is a partition of $X$ into uniformly
bounded $r$-disjoint sets.
\end{definition}

Here a family of sets is $r$-disjoint if every distinct pair of sets
is at distance larger than $r$.

\begin{proposition}\label{qs-ad0}
{\rm (cf.~\cite{P}).}
If a metric space $X$ is quasi-sparse, then it has asymptotic
dimension zero.
\end{proposition}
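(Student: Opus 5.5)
Since $X$ is quasi-sparse, Proposition~\ref{qs} gives a sparse skeleton $X'\subset X$ and a constant $A>0$ such that every point of $X$ lies within distance $A$ of $X'$. The plan is to build the required partition of $X$ directly, by grouping points according to a nearest point of $X'$, after first merging all points of $X'$ that lie in one large ball around the base point.

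Fix $r>0$ and a base point $x_0\in X'$, and write $s$ for the function $s$ of $X'$. Because $X'$ is sparse, $s(R)\to\infty$, so we can choose $R$ with $s(R)>r+2A$. Then any two distinct points of $X'\setminus B(x_0,R)$ are at distance greater than $r+2A$.

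Next define a map $\pi:X\to X'$ by choosing, for each $x\in X$, a point $\pi(x)\in X'$ with $d(x,\pi(x))\le A$ (this uses the axiom of choice, which is harmless). Define the partition as follows. One piece is
$$U_0=\{x\in X:\ \pi(x)\in B(x_0,R)\},$$
and for each $y\in X'\setminus B(x_0,R)$ there is a piece
$$U_y=\pi^{-1}(y).$$
Discard any empty pieces. These sets partition $X$. They are uniformly bounded: $U_0\subset B(x_0,R+A)$, so $\operatorname{diam}U_0\le 2R+2A$, and $\operatorname{diam}U_y\le 2A$.

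For $r$-disjointness, take $x\in U_y$ and $x'\in U_{y'}$ with $y\ne y'$ both outside $B(x_0,R)$. Then
$$d(x,x')\ \ge\ d(y,y')-2A\ \ge\ s(R)-2A\ >\ r.$$

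The main obstacle is separating $U_0$ from the pieces $U_y$. Sparsity only controls distances between pairs of points both outside $B(x_0,R)$, not distances from points outside the ball to points inside it. A point $y$ just outside the ball could be close to some $z\in X'\cap B(x_0,R)$.

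I would handle this by enlarging the central piece. Choose a second radius $R'\ge R$ such that $s(R')>r+2A+2R+2A$, and put into the central piece every point $x$ with $d(\pi(x),x_0)<R'$. The central piece then has diameter at most $2R'+2A$.

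A point $y\in X'$ with $d(y,x_0)\ge R'$ may still lie close to points $y''\in X'$ with $d(y'',x_0)\in[R,R')$. However, all such $y''$ and $y$ lie outside $B(x_0,R)$, so they are pairwise more than $r+2A$ apart. What needs checking is the distance to points $z\in X'$ with $d(z,x_0)<R$.

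The simplest fix is to make the central piece a union of "chains". Declare two points of $X'$ equivalent if they are joined by a chain in $X'$ with steps of length at most $r+2A$, and take as pieces the sets $\pi^{-1}$ of the equivalence classes. Points of $X'$ outside $B(x_0,R)$ have no neighbours within $r+2A$ outside the ball. So each chain can leave the ball for at most one step at a time, and any chain that leaves the ball must come back to it or stop. Consequently every class that meets $B(x_0,R)$ lies inside $B(x_0,R+r+2A)$. Every other class is a singleton.

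The pieces are then $\pi^{-1}$ of these classes. Distinct classes are more than $r+2A$ apart, so the pieces are more than $r$ apart, and their diameters are all at most $2(R+r+2A)+2A$. This gives the required partition, and hence asymptotic dimension zero.
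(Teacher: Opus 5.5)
Your final argument is correct, but it is organized differently from the paper's. The paper first handles a sparse space directly: given $r$, it takes all singletons together with one suitably large ball. It then gets the quasi-sparse case by quoting that asymptotic dimension is a coarse invariant.

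You never leave $X$. You partition the skeleton $X'$ into chain classes at scale $r+2A$. These are singletons outside $B(x_0,R)$ and lie in $B(x_0,R+r+2A)$ otherwise. You then pull this partition back along the nearest-point map $\pi$, losing $2A$ in separation and gaining $2A$ in diameter. In effect you prove by hand the one instance of coarse invariance that the paper cites. Your proof is therefore self-contained and gives explicit bounds (diameters at most $2R+2r+6A$); the paper's is shorter because it relies on the external theorem.

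Two remarks. First, the chain classes are not needed. The obstacle you flagged disappears if you take $R'=R+r+2A$. A point $y\in X'$ with $d(y,x_0)\ge R'$ is then more than $r+2A$ from every point of $X'\cap B(x_0,R)$, by the triangle inequality. It is also at least $s(R)>r+2A$ from every other point of $X'\cap B(x_0,R')$. So $\pi^{-1}(X'\cap B(x_0,R'))$ together with the fibres $\pi^{-1}(y)$ for $d(y,x_0)\ge R'$ already works. This is precisely how the paper's ``sufficiently large ball'' has to be chosen.

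Second, your inequalities are strict only pointwise, so the infimum distance between two pieces is only guaranteed to be $\ge r$. Run the argument with $r+1$ in place of $r$ to get the strict inequality that the definition of $r$-disjointness requires.
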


\begin{proof}
If $X$ is sparse, given $r$ a partition consisting of all singletons
except for a single sufficiently large ball works. Since a
quasi-sparse space is coarsely equivalent to a sparse space and
asymptotic dimension is a coarse invariant, quasi-sparse spaces also
have asymptotic dimension zero.
\end{proof}

On the other hand, a metric space of asymptotic dimension zero need
not be quasi-sparse, since in particular, the product of sparse spaces
is not quasi-sparse, while it has asymptotic dimension zero as the
product of spaces with asymptotic dimension zero \cite{BD}:

\begin{proposition}\label{prod}
If $X$ and $Y$ are sparse spaces, then $X\times Y$ is not
quasi-sparse.
\end{proposition}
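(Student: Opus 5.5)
Suppose, for contradiction, that $X\times Y$ is quasi-sparse, with the product metric taken to be, say, the max metric (any of the standard product metrics are bi-Lipschitz equivalent, so quasi-sparseness does not depend on the choice). By Proposition~\ref{qs}, $X\times Y$ has a sparse skeleton $Z$ together with a constant $A>0$ such that every point of $X\times Y$ lies within distance $A$ of $Z$. I will derive a contradiction by finding, arbitrarily far out, two distinct points of $Z$ at distance bounded independently of how far out they are. That violates $s_Z(R)\to\infty$.

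The idea is to exploit the fact that one factor can stay bounded while the other goes to infinity. Fix base points $x_0\in X$, $y_0\in Y$. Since $Y$ is unbounded, choose points $y_n\in Y$ with $d(y_0,y_n)\to\infty$, and let $x_n\in X$ also go to infinity. Consider the pairs of points $(x_n,y_0)$ and $(x_n,y_1)$, or more cleverly the four points $(x_0,y_0),(x_n,y_0),(x_0,y_n),(x_n,y_n)$. But the clean approach uses just two points. Take $(x_n,y_0)$ and $(x_n,y')$ where $y'\in Y$ is a fixed point with $d(y_0,y')=L>2A$ (possible since $Y$ is unbounded). Their distance is exactly $L$, and both lie at distance at least $d(x_0,x_n)\to\infty$ from $(x_0,y_0)$. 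Pick $z_n,z_n'\in Z$ within $A$ of them respectively. Then $d(z_n,z_n')\ge L-2A>0$, so $z_n\ne z_n'$, while $d(z_n,z_n')\le L+2A$. Both $z_n,z_n'$ are at distance at least $d(x_0,x_n)-A\to\infty$ from the base point. Hence $s_Z(R)\le L+2A$ for arbitrarily large $R$, and since $s_Z$ is nondecreasing it is bounded. This contradicts sparsity of $Z$.

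There is no real obstacle here, only the bookkeeping of choosing $L>2A$ so that the approximating skeleton points are forced to be distinct. Notably, the argument uses only that $X$ and $Y$ are unbounded, not that they are sparse. So the statement holds for any product of two unbounded spaces, and sparsity of the factors is needed only to place the example in the context of asymptotic dimension zero via \cite{BD}.
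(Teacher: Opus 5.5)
Your argument is correct and is essentially the paper's: fix two points in one factor more than $2A$ apart, push them to infinity along the other factor, and approximate them by points of the skeleton $Z$. Those approximants are then distinct, a bounded distance apart, and arbitrarily far out; the paper does this with the roles of $X$ and $Y$ swapped, fixing $x_1,x_2\in X$ and letting $y\to\infty$. The paper additionally uses sparsity of $Y$ to place both approximants in the same fiber $X\times\{y\}$, a step your version shows is unnecessary, so your remark that only unboundedness of the two factors is needed is right.
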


\begin{proof}
Suppose $X\times Y$ is quasi-sparse. Then it has a skeleton $Z.$ This
$Z$ is $A$-dense in $X\times Y$ for some $A>0.$ Choose $y_0\in Y.$ If
$d(y_0,y)$ is sufficiently large, then $Z\cap (X\times \{y\})$ is
$A$-dense in $X\times \{y\}.$ There exist $x_1, x_2\in X$ (independent
of $y$) such that $d(x_1,x_2)>2A.$ Then there are two distinct
elements of $Z\cap(X\times \{y\})$ whose distance from one another is
at most $d(x_1,x_2)+2A.$ This happens for every $y$ for which
$d(y_0,y)$ is sufficiently large, so $Z$ is not sparse, a
contradiction.
\end{proof}

\section{Sparsity and tryposity at infinity}\label{tryp}

In \cite{GM2}, the authors introduced the {\em convexity gap} in metric
spaces, and used its local behavior to study curvature. In this
section, we relate sparsity to the large scale behavior of this
nonconvexity measure in metric spaces. The subsequent sections of this
paper are independent of this section.

Let $(X,d)$ be a metric space, $x_0 \in X,$ and $\oB(x,R)$ be the
closed ball in $X$, centered at $x\in X$ and of radius $R$. For
$x,y\in X$ we define the \emph{hyperdistance} between $x$ and $y$ as
\begin{equation*}
hd(x,y)=2\;\inf\{r: \oB(x,r)\cap \oB(y,r)\neq\emptyset\}.
\end{equation*}

Next, we define the
\emph{leipodistance} between $x$ and $y$ by
\begin{equation*}
ld(x,y)=hd(x,y)-d(x,y).
\end{equation*}
Observe that $d(x,y)\le hd(x,y)\le2 d(x,y)$, so $0\le ld(x,y)\le
d(x,y)$. The leipodistance between a pair of points is a measure of
the degree of their failure to have a midpoint.

Now we define the \emph{convexity gap} (or simply \emph{gap}) of $X$
as
\begin{equation*}
\gamma(X)=\sup\{ld(x,y):x,y\in X\}.
\end{equation*}

In \cite{GM2}, the authors studied how the gap of a small ball around a point on a
curve shrank as its radius shrank, and related this {\em tryposity} to
curvature. In contrast, here we will look at how the gap of a large
ball grows as its radius grows, and relate the resulting notion of
tryposity at infinity to sparsity.

Define the \emph{lower and upper tryposity order at infinity} ($\lt$
and $\ut$) as
\begin{equation*}
\lt(X)=\varliminf_{R\to\infty}\frac{\log \gamma(\oB(x_0,R))}{\log R}, \ \ \ \
\ut(X)=\varlimsup_{R\to\infty}\frac{\log \gamma(\oB(x_0,R))}{\log R}.
\end{equation*}
If $\lt(X)=\ut(X)$, we will speak simply of the \emph{tryposity order
  at infinity} and denote it $\tau(X)$. These do not depend on the
choice of $x_0$.

We revisit the first two examples of the previous section, where
$X=\{0\}\cup\{a_n:n=1,2,3,\dots\}\subset\R$.

\begin{example}\label{trypq}
Fix $q>1$. Then $X_q$ takes $a_n=n^q$. In order to compute $\ut(X_q)$
we take $R=(n+1)^q$:
\begin{equation*}
\ut(X)=\varlimsup_{R\to\infty}\frac{\log (\oB(x_0,R)}{\log R}
=\lim_{n\to\infty}\frac{\log((n+1)^q-n^q)}{\log (n+1)^q}.
\end{equation*}
We use that
\begin{equation*}
\lim_{x\to\infty}\frac{(x+1)^q-x^q}{(x+1)^{q-1}}=
\lim_{x\to\infty}\frac{(x+1)^q-x^q}{x^{q-1}}=q
\end{equation*}
as bef

Therefore
\begin{equation*}
\ut(X_q) =\lim_{n\to\infty}\frac{\log((n+1)^q-n^q)}{\log (n+1)^q}
=\frac{q-1}q.
\end{equation*}
In order to compute $\lt(X_q)$ we take $R=(n+1)^q-\eps$ and then the
limit as $\eps\to0$:
\begin{equation*}
\lt(X_q)=\lim_{n\to\infty}\frac{\log(n^q-(n-1)^q)}{\log (n+1)^q}.
\end{equation*}
Similarly as before, we get
\begin{equation*}
\lt(X_q)=\lim_{n\to\infty}\frac{\log(q(n-1)^{q-1})}{\log
  (n+1)^q}=\frac{q-1}q.
\end{equation*}
Thus, $\tau(X_q)=1-1/q$.
\ee

\begin{example}\label{trypexp}
Now we consider the case of $a_n=2^n$. We get
\begin{equation*}
\ut(X)=\lim_{n\to\infty}\frac{\log(2^{n+1}-2^n)}{\log 2^{n+1}}=
\lim_{n\to\infty}\frac{n}{n+1}=1
\end{equation*}
and similarly,
\begin{equation*}
\lt(X)=\lim_{n\to\infty}\frac{\log(2^n-2^{n-1})}{\log 2^{n+1}}=
\lim_{n\to\infty}\frac{n-1}{n+1}=1.
\end{equation*}
Thus, $\tau(X)=1$.
\ee

\begin{remark}\label{trypnbhd}
If a metric space $X$ is contained in some $A-$neighborhood of a
subspace $X'$, then $\ut(X')=\ut(X)$ and $\lt(X')=\lt(X)$ so that
$\tau(X')=\tau(X)$ if they exist.
\end{remark}

In general, the order of tryposity at infinity is not invariant under
quasi-isometry:

\begin{example}\label{mountainrange}
(The Mountain Range) Let $M$ be the subset of the plane consisting of
  the origin and segments from $(2^n,0)$ to $((3/2) 2^n, 2^n)$ and
  from $((3/2)2^n,2^n)$ to $(2^{n+1},0)$ for $n \in \Z.$ Then
  $\tau(M)=1.$ But $M$ is quasi-isometric to the nonnegative
  horizontal axis since projection onto the horizontal coordinate is a
  bi-Lipschitz map from $M$ onto the nonnegative reals, and the
  halfline has $\tau = 0.$
\end{example}

The following modification of Example~\ref{mountainrange} shows that
even among sparse spaces the tryposity order at infinity is not a
quasi-isometry invariant.

\begin{example}\label{trypnotq-iinvar}
Consider the subspace $M'$ of $M$ consisting of the origin and the
endpoints of the segments of $M$ and $2^{n/2}$ equally spaced points
in the interior of the segment starting at $2^n$ and the segment
ending at $2^{n+1}$ on the $x$-axis. Then $M'$ is sparse, with
$\sigma(M')=1/2,$ and its tryposity order at infinity is
$\tau(M')=\tau(M)=1,$ but its image in the $x$-axis by the
bi-Lipschitz projection map has $\tau = 1/2.$
\end{example}

\begin{proposition}\label{tryp/sparsity}
If $X$ is quasi-sparse of order $0 < \sigma \leq 1$ then the upper
tryposity order at infinity $\ut(X) \geq \sigma(X).$
\end{proposition}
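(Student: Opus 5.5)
The plan is to reduce to a genuinely sparse space and then exhibit, at arbitrarily large scales, a pair of points whose leipodistance is as large as their distance. First, $X$ is quasi-sparse, so by Proposition~\ref{qs} it has a sparse skeleton $X'$ with $X$ contained in an $A$-neighborhood of $X'$. By Remark~\ref{trypnbhd} we have $\ut(X')=\ut(X)$, and by the Remark following Proposition~\ref{qs} the sparsity order of $X$ is by definition $\sigma(X')$. Hence it suffices to assume that $X$ itself is sparse with $\sigma(X)=\sigma\in(0,1]$. Fix a base point $x_0$.

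The key observation is that a nearest-neighbor pair has maximal leipodistance. Let $x\in X$, and let $y\ne x$ be a point with $D:=d(x,y)$ equal to (or within a factor $1+\eps$ of) $\inf_{z\ne x}d(x,z)$. For any $z\in X$ we have $\max(d(x,z),d(y,z))\ge D$, up to the factor $1+\eps$: for $z=x$ or $z=y$ this holds trivially, and otherwise $d(x,z)\ge D$. Hence $hd(x,y)\ge 2D/(1+\eps)$, so $ld(x,y)\ge D(1-2\eps)$, essentially $D$. Since $\oB(x_0,R)\subset X$, any witness $z$ found in $X$ is available, and the same lower bound holds inside any ball containing $x$ and $y$. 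Thus $\gamma(\oB(x_0,\cdot))$ is bounded below by nearest-neighbor distances of points in a slightly smaller ball.

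Next we bound $D$ below using $s$. Take $x\in X$ with $\rho:=d(x_0,x)$ large and set $R=\rho/2$. If $d(x_0,y)\ge R$, then both $x$ and $y$ lie outside $B(x_0,R)$, so $D\ge s(\rho/2)$. Otherwise $D\ge d(x_0,x)-d(x_0,y)>\rho/2$. In either case
\[D\ge\min(\rho/2,\,s(\rho/2)).\]
For the containing ball, note that $D\le d(x,x_0)=\rho$ (taking $x\ne x_0$), so $d(x_0,y)\le 2\rho$, and both points lie in $\oB(x_0,2\rho)$. Therefore
\[\gamma(\oB(x_0,2\rho))\ge c\,\min(\rho/2,\,s(\rho/2))\]
for an absolute constant $c>0$.

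Finally, since $X$ is unbounded, we can let $\rho\to\infty$ along a sequence of points of $X$. Because $\sigma=\ls(X)$, for each $\delta>0$ we have $s(\rho/2)\ge(\rho/2)^{\sigma-\delta}$ for large $\rho$. Since $\sigma\le1$, also $\rho/2\ge(\rho/2)^{\sigma-\delta}$. Hence
\[\frac{\log\gamma(\oB(x_0,2\rho))}{\log 2\rho}\ge\frac{(\sigma-\delta)\log(\rho/2)+\log c}{\log 2\rho}\to\sigma-\delta.\]
Taking the $\varlimsup$ and letting $\delta\to0$ gives $\ut(X)\ge\sigma$. The hypothesis $\sigma\le1$ is exactly what makes the branch $D>\rho/2$ harmless. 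The main point needing care is the second paragraph: the near-infimum choice of $y$, and checking that third points $z$ can never produce a smaller hyperdistance. The remaining steps are routine estimates.
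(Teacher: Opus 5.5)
Your proof is correct and follows the paper's argument. You reduce to a sparse skeleton via Remark~\ref{trypnbhd}, observe that far-out points are isolated at scale $s$ so that a (near-)nearest-neighbour pair has leipodistance comparable to $s$, and use $\sigma\le 1$ to absorb the change of scale; your explicit nearest-neighbour choice and your check that the pair lies in the ball actually make rigorous two steps the paper states loosely (it asserts $ld(x',x'')\ge s(R)$ for all far pairs and places them in $\oB(x_0,R+s(R))$). The only slip is cosmetic: with $D$ merely within a factor $1+\eps$ of the infimum you get $D\le(1+\eps)\rho$ rather than $D\le\rho$, so use $\oB(x_0,3\rho)$ (or choose $y$ with $D\le\rho$), which does not affect the limit.
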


\begin{proof}
By Remark ~\ref{trypnbhd}, it suffices to prove the proposition for a
sparse skeleton of a given quasi-sparse space. So we assume $X$ is
sparse of order $\sigma.$

For all $x' \in X \setminus B(x_0,R+s(R)),$ and for all $x \in X
\setminus B(x_0,R),$ we have $d(x,x') \geq s(R).$ So for all $x',x''
\in X \setminus B(x_0,R+s(R)),$ we have $ld(x',x'') \geq s(R).$ Thus
$\gamma(\oB(x_0,R+s(R))) \geq s(R),$ and
\begin{equation*}
\frac{\log{\gamma(\oB(x_0,R+s(R)))}}{\log(R+s(R)} \geq \frac{\log
  s(R)}{\log(R+s(R)} = \frac{\log s(R)}{\log R}  \frac{\log
  R}{\log(R+s(R)}.
\end{equation*}
So
\begin{equation*}
\begin{split}
\ut(X) = \varlimsup \frac{\log{\gamma(\oB(x_0,R))}}{\log(R} &\geq
\varlimsup \frac{\log{\gamma(\oB(x_0,R+s(R)))}}{\log(R+s(R)} \\ &\geq
\varlimsup \frac{\log s(R)}{\log R} \frac{\log R}{\log (R+s(R)} =
\sigma.
\end{split}
\end{equation*}
\end{proof}

The following modification of Example~\ref{trypq} shows that a sparse
space's tryposity order at infinity may exceed its sparsity order.

\begin{example}
Let $X = X_3 \cup \{n^3 + n : n=1,2,3,\dots\}.$
Then $\tau(X) = \tau(X_3) = 2/3$ but $\sigma(X) = 1/3.$
\end{example}

In fact, a space may not even be quasi-sparse but still have tryposity
order at infinity 1, as this modification of Example~\ref{trypexp}
shows:

\begin{example}
Let $X=\{0\} \cup \bigcup_{k=1}^\infty [2^{2k}, 2^{2k+1}].$ Then
$\tau(X) =1$ as in Example ~\ref{trypexp}, but $X$ is not
quasi-sparse.
\end{example}

\section{Sparse ends}\label{ends}

We want to define analogues of topological ends in metric spaces that
are not connected or even coarsely connected, like sparse spaces.

Let $(X,d)$ be a metric space with a base point $x_0$. For
$\alpha\in[0,1]$ and $K>0$ we define the relation of being
\emph{$(\alpha,K)$-close} (relative to the base point $x_0)$. Namely,
$x,y\in X$ are in this relation if
\begin{equation*}
d(x,y)\le K[\min\{d(x,x_0),d(y,x_0)\}]^\alpha + K.
\end{equation*}
Let $S$ be a subset of $X.$ We say $x,y$ are
\emph{$(\alpha,K)$-chained} (in $S$) if there are points
$x_1,x_2,\dots,x_n$ (for some $n>1$) in $S$ such that $x_1=x$,
$x_n=y$, and $x_i,x_{i+1}$ are $(\alpha,K)$-close for
$i=1,2,\dots,{n-1}$. This is of course an equivalence relation.

We call the equivalence classes the \emph{$(\alpha,K)$-components} of
$S$. If $S$ has just one \akcomp, we call it \emph{\akconn}. If $S$ is
\akconn \ for some $K>0$, we call it \emph{\aconn}.

\begin{remark} (i) The \akcomp s of $S$ are the maximal \akconn
  \ subsets of $S$.
\item[(ii)] If $S$ is \akconn, then it is \akpconn \ for all $K' \geq K.$
\item[(iii)] If $S=A \cup B$ is \akconn, with $A,B\neq \emptyset,$
  then there are \ak-close \ $x\in A, y\in B.$
\item[(iv)] If $A$ and $B$ are \akconn \ subsets of $S$ and some
  $x\in A$ and $y\in B$ are \ak-close, then $A\cup B$ is \akconn.
\end{remark}

\begin{remark}\label{l1}
Let $X = \{x_0, x_1, x_2, \ldots \}$ be the set of terms of an
increasing sequence in the halfline $[0,\infty)$ going to infinity.
  Then X is \akconn \ if and only if all pairs of consecutive elements
  $x_i,x_{i+1}\in X$ are ($\alpha,K$)-close.
\end{remark}

If $A\subset X,$ we write $d(A,x_0)$ for $\inf\{d(x,x_0) | x\in A\}.$

\begin{lemma}
Let $R>0.$ A subset $S$ of a pointed metric space $(X,x_0,d)$ is
\aconn \ if and only if $S \setminus B(x_0,R)$ is \aconn.

More specifically: if $S$ is \akconn, then $S \setminus B(x_0,R)$ is
\akpconn, where $K'= 2K(R^\alpha +1) +2R,$ and on the other hand if $S
\setminus B(x_0,R)$ is \akconn, then $S$ is \akpconn, where
$K'=\max\{K, 2d(S \setminus B(x_0,R), x_0)+1\}.$
\end{lemma}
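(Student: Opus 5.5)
The first sentence follows from the two quantitative statements, so the plan is to prove those two directions separately. Throughout, write $|x|$ for $d(x,x_0)$. Assume $S\setminus B(x_0,R)$ is nonempty; otherwise the statement is degenerate.

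For the direction where $S$ is \akconn, take $x,y\in S\setminus B(x_0,R)$ and an \ak-chain $x=x_1,\dots,x_n=y$ in $S$. Some links may dip into $B(x_0,R)$. I will delete the points of the chain lying in $B(x_0,R)$ and check that consecutive surviving points are $(\alpha,K')$-close. Suppose $x_i,x_j$ are consecutive survivors with $x_{i+1},\dots,x_{j-1}\in B(x_0,R)$. If $j=i+1$ there is nothing to do, since \ak-close implies $(\alpha,K')$-close for $K'\ge K$. Otherwise $|x_{i+1}|<R$, and closeness of $x_i,x_{i+1}$ gives $d(x_i,x_{i+1})\le K|x_{i+1}|^\alpha+K< K(R^\alpha+1)$. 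The same bound holds for $d(x_{j-1},x_j)$. Hence
\[
d(x_i,x_j)\le 2K(R^\alpha+1)+d(x_{i+1},x_{j-1})\le 2K(R^\alpha+1)+2R .
\]
This is at most $K'\le K'\min\{|x_i|,|x_j|\}^\alpha+K'$, so $x_i,x_j$ are $(\alpha,K')$-close. The surviving points therefore form an $(\alpha,K')$-chain inside $S\setminus B(x_0,R)$. This is exactly where the constant $2K(R^\alpha+1)+2R$ comes from.

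For the converse direction, assume $S\setminus B(x_0,R)$ is \akconn\ and set $\delta=d(S\setminus B(x_0,R),x_0)$. By definition of the infimum, pick $z\in S\setminus B(x_0,R)$ with $|z|<\delta+1$. For any $w\in S\cap B(x_0,R)$ we have $|w|<R\le\delta$, so
\[
d(w,z)\le |w|+|z|<2\delta+1\le K' ,
\]
and thus $w$ and $z$ are $(\alpha,K')$-close, the right-hand side of the closeness inequality being at least $K'$. Every point of $S\setminus B(x_0,R)$ is \ak-chained to $z$, hence $(\alpha,K')$-chained to $z$ since $K'\ge K$. So every point of $S$ is $(\alpha,K')$-chained to $z$, and by transitivity $S$ is \akpconn.

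I expect no real obstacle. The only care needed is in the first direction: handling a maximal run of consecutive chain points inside the ball, and noting that the endpoints $x,y$ themselves lie outside the ball, so every run is bounded on both sides by survivors.
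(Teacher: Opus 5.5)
Your proof is correct and follows the paper's argument. In the first direction the paper likewise reduces to a chain whose interior points all lie in $B(x_0,R)$ and bounds $d(x,y)$ by the same three-term triangle inequality. In the second direction it uses the same near-minimal point of $S\setminus B(x_0,R)$ as a hub, except that it handles two points inside the ball directly via $(\alpha,2R)$-closeness rather than routing both through the hub.

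One small correction: your strict inequality $K|x_{i+1}|^\alpha+K<K(R^\alpha+1)$ should be non-strict when $\alpha=0$. This does not affect the argument.
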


\begin{proof}
Assume first that $S$ is \akconn. We show that every $x,y \in
S\setminus B(x_0,R)$ are \akpch \ in $S\setminus B(x_0,R),$ where $K'
= 2K(R^\alpha +1) +2R.$

Note that it suffices to show this when each $z$ other than $x$ and
$y$ in the \ak-chain connecting them is in $B(x_0,R).$

Let this \ak-chain be $x,x',\ldots,y',y.$ Then
\begin{equation*}
\begin{split}
d(x,y) &\leq d(x,x') + d(x',y') + d(y',y)\\
	& \leq Kd(x',x_0)^\alpha +K + 2R + Kd(y',x_0)^\alpha +K\\
	& \leq 2KR^\alpha + 2K +2R\\
	& \leq K'
 \end{split}
\end{equation*}
so that $x$ and $y$ are \akp-close.

Now instead assume that $S \setminus B(x_0,R)$ is \akconn.

Let $M=d(S \setminus B(x_0,R), x_0),$ so that $M \geq R,$ and pick
$y_0 \in S \setminus B(x_0,R)$ with $d(x_0,y_0) \leq M+1.$ Set
$K'=\max\{K, 2M+1\}.$ Consider $x,y \in S.$

If $x,y \in S \setminus B(x_0,R),$ then they are \akch \ (in $S
\setminus B(x_0,R)$) and thus \akpch.

If $x,y \in S \cap B(x_0,R),$ then they are $(\alpha,2R)$-close and
thus $(\alpha,K')$-close.

If $x \in S \cap B(x_0,R)$ and $y \in S \setminus B(x_0,R),$ then they
are \akpch \ (in $S$) since $d(x,y_0) \leq 2M+1$ so that $x$ is
$(\alpha,K')$-close to $y_0.$
\end{proof}

Note that the first half of the proof shows that if every $x,y \in
S\setminus B(x_0,R)$ are \akch \ in $S,$ then they are \akpch \ in
$S\setminus B(x_0,R)$ (so that $S\setminus B(x_0,R)$ is \akpconn).

\begin{proposition}\label{aconninvariance}
Quasi-isometries preserve \aconn ness. That is, if $S$ is an \aconn
\ subset of a pointed metric space $(X,x_0,d)$ and $\ph: (X,x_0,d) \to
(X',x'_0,d')$ is a quasi-isometry to another pointed metric space,
then $S'=\ph(S)$ is \aconn. More specifically, if $S$ is \akconn, then
$S'$ is \akpconn, where $K'=\max\{C(K+1), 2^\alpha C^{1+\alpha} K\}$
for a constant $C$ associated with the quasi-isometry.
\end{proposition}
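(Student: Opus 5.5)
Since $\ph$ maps chains to chains, it suffices to show that \ak-close points of $S$ have \akp-close images in $S'$. Indeed, any two points of $S'$ are images $\ph(x),\ph(y)$ with $x,y\in S$. These are joined by an \ak-chain $x=x_1,\dots,x_n=y$ in $S$, and then $\ph(x_1),\dots,\ph(x_n)$ is a chain in $S'$. So the whole proof reduces to a single two-point estimate.

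First I fix constants. Let $\ph$ be a $(C_0,A)$-quasi-isometry and put $c=d'(\ph(x_0),x'_0)$. I will choose one constant $C\ge 1$, depending on $C_0$, $A$ and $c$, that dominates all the additive and multiplicative errors. The requirements are:
\begin{itemize}
\item $d'(\ph(x),\ph(y))\le C d(x,y)+C$ for all $x,y\in X$;
\item $d(x,x_0)\le C\, d'(\ph(x),x'_0)+C$ for all $x\in X$.
\end{itemize}
The second comes from the lower quasi-isometry bound $d(x,x_0)\le C_0 d'(\ph(x),\ph(x_0))+C_0A$ together with the triangle inequality through $x'_0$. The value of $C$ will presumably need adjusting at the end so that the constants come out in the stated form $K'=\max\{C(K+1),2^\alpha C^{1+\alpha}K\}$.

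Now suppose $x,y$ are \ak-close, and assume without loss of generality that $d'(\ph(x),x'_0)\le d'(\ph(y),x'_0)$. Write $m=\min\{d(x,x_0),d(y,x_0)\}$ and $m'=d'(\ph(x),x'_0)$, the minimum on the target side. Then
\begin{equation*}
d'(\ph(x),\ph(y))\le C(Km^\alpha+K)+C.
\end{equation*}
The main obstacle is to bound $m^\alpha$ by a multiple of $m'^\alpha$ plus a constant. We have $m\le d(x,x_0)\le Cm'+C$. Splitting into the cases $m'\ge 1$ and $m'<1$ gives $m\le 2Cm'$ when $m'\ge1$, and $m\le 2C$ otherwise. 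Since $\alpha\le 1$, in either case
\begin{equation*}
m^\alpha\le (2C)^\alpha m'^\alpha+(2C)^\alpha .
\end{equation*}

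Substituting this bound, the right-hand side becomes
\begin{equation*}
2^\alpha C^{1+\alpha}K\, m'^\alpha+\bigl(C K(2C)^\alpha+C\bigr).
\end{equation*}
The coefficient of $m'^\alpha$ is already $2^\alpha C^{1+\alpha}K$, so $\ph(x),\ph(y)$ are \akp-close once $K'$ dominates both that coefficient and the additive constant. The additive constant does not literally match $C(K+1)$ as stated. I would handle this by absorbing $(2C)^\alpha$ into a redefinition of $C$, for instance by enlarging $C$ at the outset, or by treating the small-$m'$ regime separately through the $K'$ summand. I regard this bookkeeping as the delicate but routine part of the proof.
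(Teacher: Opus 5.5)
Your proof is correct. It differs from the paper's in how it handles points near the base point.

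The two-point estimate is the same one the paper uses. The paper, however, first applies the Lemma to pass to the tails $S\setminus B(x_0,2C)$ and $S'\setminus B(x'_0,2C^2+C)$, so that every chain point maps outside $B(x'_0,1)$. Then $m'+1\le 2m'$ and the estimate closes with no additive loss, which is where the $\max$ form comes from.

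You instead show directly that $\varphi$ sends every $(\alpha,K)$-close pair in $S$ to an $(\alpha,K'')$-close pair, and you dispose of the regime $m'<1$ by a case split. This costs an additive term, but it has three advantages:
\begin{itemize}
\item it avoids the Lemma and the remark after it;
\item it does not need $\varphi(x_0)=x'_0$;
\item it gives a constant for $S'$ itself in terms of the original $K$ and the quasi-isometry constants.
\end{itemize}
In the paper's argument, the $K$ inside the displayed $K'$ is actually the larger tail constant produced by the Lemma, and getting from the tail of $S'$ back to $S'$ uses the Lemma again.

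Two small repairs are needed. First, the additive constant is $2^\alpha C^{1+\alpha}K+C(K+1)$; you dropped the term $CK$. So your $K''=2^\alpha C^{1+\alpha}K+C(K+1)$ is exactly the sum of the two entries of the paper's $\max$. Second, the bookkeeping you left open takes one line. A sum is at most twice the max. Replacing $C$ by $\tilde C=2C$, which still satisfies both of your requirements, multiplies $C(K+1)$ by $2$ and $2^\alpha C^{1+\alpha}K$ by $2^{1+\alpha}\ge 2$. Hence $K''\le\max\{\tilde C(K+1),\,2^\alpha\tilde C^{1+\alpha}K\}$, which is the stated form.
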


\begin{proof}
Since $\ph$ is a quasi-isometry, there is $C>1$ such that for all $x,y
\in S,$ we have
\begin{equation*}
d'(\ph(x),\ph(y)) \leq Cd(x,y)+C
\end{equation*}
and
\begin{equation*}d(x,y) \leq Cd'(\ph(x),\ph(y))+C.
\end{equation*}
By the Lemma, it suffices to show that $S' \setminus B(x'_0,2C^2 +C)$
is \aconn. Again by the Lemma, there is some $K > 0$ so that $S
\setminus B(x_0,2C)$ is \akconn.

Let $x',y' \in S' \setminus B(x'_0,2C^2 +C)$ and let $K'=\max\{C(K+1),
2^\alpha C^{1+\alpha} K\}.$ By the remark after the Lemma, it suffices
to show that $x',y'$ are \akpch \ in $S'$ (since then they will also
be \akpch \ in $S' \setminus B(x'_0,2C^2 +C)$).

Note first that $\ph(B(x_0,2C)) \subset B(x'_0,2C^2 +C).$ So there are
$x,y \in S \setminus B(x_0,2C)$ with $\ph(x)=x', \ph(y)=y'$ and so
also an \ak-chain $x=x_1,\ldots, x_n=y$ in $S \setminus B(x_0,2C).$
Let $x'_i = \ph(x_i)$ for each $i.$ So for each $i,$ $x'_i \notin
B(x'_0,1)$ since $x_i \notin B(x_0,2C).$ Then for each $i=1,\ldots,
n-1$ we have
\begin{equation*}
\begin{split}
d'(x'_i,x'_{i+1}) &\leq Cd(x_i,x_{i+1}) + C\\
	& \leq C(K[\min\{d(x_i,x_0),d(x_{i+1},x_0)\}]^\alpha +K) + C\\
	& = CK[\min\{d(x_i,x_0),d(x_{i+1},x_0)\}]^\alpha +C(K+1)   \\
	& \leq CK[C\min\{d'(x'_i,x'_0),d'(x'_{i+1},x'_0)\} +C]^\alpha +C(K+1)   \\
	& = C^{1+\alpha}K[\min\{d'(x'_i,x'_0),d'(x'_{i+1},x'_0)\} +1]^\alpha +C(K+1)   \\
	& \leq C^{1+\alpha}K[2\min\{d'(x'_i,x'_0),d'(x'_{i+1},x'_0)\}]^\alpha +C(K+1)   \\
	& =2^\alpha C^{1+\alpha}K[\min\{d'(x'_i,x'_0),d'(x'_{i+1},x'_0)\}]^\alpha +C(K+1)   \\
	& \leq K'[\min\{d'(x'_i,x'_0),d'(x'_{i+1},x'_0)\}]^\alpha +K'
 \end{split}
\end{equation*}
So $x'$ and $y'$ are \akpch\ in $S'$ as we wanted.
\end{proof}

\begin{example}\label{exlip}
Although by Proposition~\ref{aconninvariance} a bi-Lipschitz image of
an \aconn \ set is \aconn, this example shows that if $X$ is
$\alpha$-connected and $f:X\to Y$ is Lipschitz (even with Lipschitz
constant 1), then it may happen that $f(X)$ is not $\alpha$-connected
(with the same $\alpha$).

Set $X=\{i^2:i=0,1,2,3,\dots\}$ and $\alpha=1/2$. Let us construct $f$
and $Y=f(X)$ by induction. Our set $Y$ will be of the form
$\{y_n:n=0,1,2,3,\dots\}$ with $y_0\le y_1\le y_2\le y_3\dots$. We
start with $y_0=0$. Suppose that $y_i$ is defined and $f(i^2)=y_i$ for
$i=0,1,2,\dots,n$. If $k$ is sufficiently large, we have
\begin{equation*}
2k-1= d\big(k^2,(k-1)^2\big)>n\big(d(y_0,y_n)\big)^\alpha+n =n
\sqrt{y_n} +n.
\end{equation*}
We fix such $k>n+1$ and set $y_i=f(i^2)=y_n$ for
$i=n+1,n+2,n+3,\dots,k-1$ and $y_k=f(k^2)=y_n+k^2-(k-1)^2$. Then
$y_{k-1}$ and $y_k$ are not ($\alpha,n$)-close. By Remark~\ref{l1},
$f(X)$ is not ($\alpha,n$)-connected. Since this happens for
infinitely many $n$, $f(X)$ is not $\alpha$-connected.
\ee

\begin{corollary}
Let $S$ be a subset of a metric space $(X,d)$ and $x_0,y_0 \in X.$
Then if $S$ is \aconn \ relative to $x_0$ then it is \aconn \ relative
to $y_0.$

More specifically, if $S$ is \akconn \ relative to $x_0$, then it is
\akpconn \ relative to $y_0,$ where $K'=\max\{C(K+1), 2^\alpha
C^{1+\alpha} K\}$ and $C = d(x_0,y_0).$
\end{corollary}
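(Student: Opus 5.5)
The natural route is to apply Proposition~\ref{aconninvariance} to the identity map $\mathrm{id}:(X,x_0,d)\to(X,y_0,d)$, regarded as a map between two pointed metric spaces. The underlying metric is unchanged and only the base point moves, so the identity is an isometry and in particular a quasi-isometry. Proposition~\ref{aconninvariance} then says that $S=\mathrm{id}(S)$ is \aconn\ relative to $y_0$ whenever it is \aconn\ relative to $x_0$. That gives the first assertion immediately.

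For the quantitative assertion, I need the constant $C$ from the proof of Proposition~\ref{aconninvariance}. There, $C$ is any constant with $C>1$ satisfying
\[
d'(\ph(x),\ph(y))\le C\,d(x,y)+C,\qquad d(x,y)\le C\,d'(\ph(x),\ph(y))+C .
\]
For the identity map both inequalities hold trivially for every $C\ge 1$. The place where the base point actually enters is the comparison of $\min\{d(x_i,x_0),d(x_{i+1},x_0)\}$ with $\min\{d(x_i,y_0),d(x_{i+1},y_0)\}$. By the triangle inequality, $d(x,x_0)\le d(x,y_0)+d(x_0,y_0)$. So with $C\ge\max\{1,d(x_0,y_0)\}$ we get $d(x,x_0)\le C\,d(x,y_0)+C$, which is exactly the form used in the displayed chain of inequalities. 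Likewise the inclusion $\ph(B(x_0,2C))\subset B(y_0,2C^2+C)$ holds, since $B(x_0,2C)\subset B(y_0,2C+d(x_0,y_0))$.

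With this choice, the chain of inequalities in the proof of Proposition~\ref{aconninvariance} goes through verbatim and produces $K'=\max\{C(K+1),2^\alpha C^{1+\alpha}K\}$.

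The one delicate point is the requirement $C>1$ (or $C\ge 1$), which the proof uses in the step $[\min+1]^\alpha\le[2\min]^\alpha$. That step relies on the points lying outside $B(y_0,1)$, and also on the passage from $C$ to $C^2$. If $d(x_0,y_0)<1$, the corollary's choice $C=d(x_0,y_0)$ has to be read as $\max\{1,d(x_0,y_0)\}$, or handled directly.

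In the degenerate case $x_0=y_0$ there is nothing to prove. Otherwise I would verify directly that each use of $C$ in the proof needs only the inequality $|d(x,x_0)-d(x,y_0)|\le d(x_0,y_0)$ together with $C\ge 1$. I expect this bookkeeping of the constant, rather than the qualitative statement, to be the main obstacle. I would resolve it by stating the proof with $C=\max\{1,d(x_0,y_0)\}$ and noting that this agrees with the stated $C$ whenever $d(x_0,y_0)\ge1$.
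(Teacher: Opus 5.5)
Your qualitative argument is correct and is essentially the paper's. The paper applies Proposition~\ref{aconninvariance} to the map sending $x_0$ to $y_0$ and fixing every other point; that map preserves base points, so the Proposition's proof applies to it as written. You use the identity instead and supply the base-point comparison $d(x,x_0)\le d(x,y_0)+d(x_0,y_0)$ yourself. This is slightly cleaner, since the identity maps $S$ onto $S$, whereas the paper's map replaces $x_0$ by $y_0$ whenever $x_0\in S$. Your objection to $C=d(x_0,y_0)<1$ is also right. For $\alpha=0$ (reading $0^0=1$), closeness means $d(x,y)\le 2K$ for every base point, yet the stated $K'=C(K+1)$ is smaller than $K$ once $C<K/(K+1)$.

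The gap is the claim that the chain of inequalities ``goes through verbatim and produces'' $K'=\max\{C(K+1),2^\alpha C^{1+\alpha}K\}$. This gap survives your repair $C=\max\{1,d(x_0,y_0)\}$. As you note, that chain only works for points outside $B(y_0,1)$, so it must be run on an $(\alpha,K)$-chain in $S\setminus B(x_0,2C)$. Such chains come from the Lemma, which only gives the larger constant $2K((2C)^\alpha+1)+4C$ there; the Proposition's proof silently reuses the letter $K$. Returning from the complement of the ball to all of $S$ uses the Lemma again. So the argument proves the formula only with an enlarged $K$, and with the original $K$ the formula is false.

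Here is a counterexample. Take $X=\R$, $x_0=0$, $y_0=1$, $\alpha=1/2$, $K=10$, and $S=\{1\}\cup\{21,22,23,\dots\}$.
\begin{itemize}
\item Relative to $0$, $S$ is $(1/2,10)$-connected. The step from $1$ to $21$ uses $20\le 10\cdot 1^{1/2}+10$, and the remaining consecutive steps have length $1$.
\item Relative to $1$, the point $1$ is $(1/2,K')$-close to another point $w$ of $S$ only if $w-1\le K'[0]^{1/2}+K'=K'$. That requires $K'\ge 20$.
\item The stated constant is $K'=\max\{11,10\sqrt2\}<15$.
\end{itemize}

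The clean fix bypasses Proposition~\ref{aconninvariance} altogether. Put $c=d(x_0,y_0)$. Then $\min\{d(x,x_0),d(y,x_0)\}\le\min\{d(x,y_0),d(y,y_0)\}+c$. Also, $t\mapsto t^\alpha$ is subadditive on $[0,\infty)$ for $\alpha\in[0,1]$ (with $0^0=1$). Hence $(\alpha,K)$-closeness relative to $x_0$ implies $(\alpha,K(1+c^\alpha))$-closeness relative to $y_0$, and chains carry over unchanged. This proves the corollary with $K'=K(1+c^\alpha)$ for every $c\ge 0$, with no need for the Lemma or for $C\ge 1$. The example above shows this constant cannot be improved in general.
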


\begin{proof}
Let $\ph:(X,x_0) \to (X,y_0)$ be the quasi-isometry taking $x_0$ to
$y_0$ and leaving every other point fixed.
\end{proof}

Given an unbounded subset $S$ of a pointed metric space $(X,x_0,d),$
let us look at the set of unbounded \akcomp s of $S\setminus
B(x_0,R),$ provided with the discrete topology. For $R<R'$, each
unbounded \akcomp \ for $R'$ is contained in some unbounded \akcomp
\ for $R$. However, ``splitting'' can occur, so the number of
unbounded \akcomp s for $R'$ can be larger than for $R$, but it cannot
be smaller.

Let the {\em real \akray} be the (barely) \akconn \ sequence of reals
$\Rak = \{(a_n)_{n=0}^\infty | a_0=0, a_n = a_{n-1} + Ka_n^\alpha + K,
n>0 \} \subset \R_{\ge 0}.$

The image in $S \subset X$ of $\Rak$ under a quasi-isometry is
\akpconn \ for $K'$ as in the proposition. We define an {\em \akqray}
in $S \subset X$ as an \akconn \ sequence in $S$ that is the image of
some real \akpray \ $\Rakp$ under a quasi-isometry, and we say that
the \akqray \ is {\em based} at the image of 0.

Note that if $(x_n)$ is an \akqray, then for any $R >0$, $(x_n)
\setminus B(x_0,R)$ has exactly one unbounded \akcomp.

If $(x_n)$ and $(y_n)$ are two \akqray s in $S$, set $(x_n)\sim (y_n)$
if for all $R >0$, the unbounded \akcomp \ of $(x_n) \setminus
B(x_0,R)$ and of $(y_n) \setminus B(x_0,R)$ lie in the same
(unbounded) \akcomp \ of $S \setminus B(x_0,R)$.

We define an \emph{$(\alpha,K)$-end} $A$ as an equivalence class of
\akqray s.

A basic open set of \akend s is a set of equivalence classes of
representative sequences $(x_n)$ such that for some $R>0$, the
unbounded \akcomp \ of each $(x_n) \setminus B(x_0,R)$ lies in the
same (unbounded) \akcomp \ of $S \setminus B(x_0,R)$.

For example, if $X$ is the binary tree with root $x_0$ viewed as a
metric space, $S$ is the vertex set of $X$, and we take $\alpha = 0$,
the set of \akend s of $S$ is empty for $K<1$ while for $K \ge 1$ it
corresponds to the set of infinite paths in $X$ topologized as a
Cantor set, i.e. the usual space of topological ends of $X$.

Clearly \akend s aren't invariant under quasi-isometries, as \akconn
ness isn't. But we use them to define \aend s, which will be
invariant.

Now we keep $\alpha$ constant, but vary $K$. Here we do not have
monotonicity. As $K$ grows, two \akend s can be glued together, but
also new \akend s can be created. Thus, we will take a direct limit
over $K$ to define the space of \aend s.

First note that if $K<K'$, then every \akend \ is contained in some
\akpend. Thus there is a well defined map from \akend s to \akpend s.

Fix $\alpha.$ For each $K>0,$ let $\calek$ be the set of all \akend s.
We define an equivalence relation $\sim$ on the disjoint union
$\bigsqcup_{K>0} \calek$: if $E \in \calek$ and $E' \in \calekp$ then
$E \sim E'$ if there is some $L \geq K,K'$ and some $F \in {\mathcal
  E}_L$ such that $E$ and $E'$ are contained in $F$.

An equivalence class of this relation will be called an
\emph{$\alpha$-end}. That is, the set of \aend s of the subset $S$ of
the pointed metric space $(X,x_0,d)$ is
\begin{equation*}
\caleinf = \caleinf(S) = \bigsqcup_{K>0} \calek / \sim.
\end{equation*}

So every $e \in \caleinf$ is an equivalence class $e=[E]$, where $E
\in \calek$ for some $K$.

We endow the set of \aend s with the direct limit topology, so that a
set of \aend s is open when for each $K$ the corresponding set of \ak
\ representatives is.

\begin{proposition}\label{aendinvariance}
A quasi-isometry induces a homeomorphism of the spaces of \aend s.

That is, if $\alpha \geq 0$ and $S$ is a subset of a pointed metric
space $(X,x_0,d)$ with \aend s $\caleinf(S)$ and $\ph: (X,x_0,d) \to
(X',x'_0,d')$ is a quasi-isometry to another pointed metric space,
then $\caleinf(S)$ is homeomorphic to $\caleinf(S')$, where
$S'=\ph(S)$.
\end{proposition}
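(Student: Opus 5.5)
The plan is to construct a map $\Phi:\caleinf(S)\to\caleinf(S')$ directly from $\ph$, build the analogous map $\Psi$ from a quasi-inverse $\ph':X'\to X$, and show that $\Phi$ and $\Psi$ are mutually inverse and continuous. Fix constants $C>1$ such that both $\ph$ and $\ph'$ satisfy the inequalities used in the proof of Proposition~\ref{aconninvariance}, and such that $d(\ph'\ph(x),x)\le C$ and $d'(\ph\ph'(x'),x')\le C$. By the Corollary we may assume $\ph(x_0)=x'_0$ and $\ph'(x_0')=x_0$; changing base points only changes $K$, and this is absorbed in the direct limit.

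\textbf{Step 1 (the map on rays).} Let $(x_n)$ be an \akqray\ in $S$, so $(x_n)$ is the image of some $\Rakp$ under a quasi-isometry $\psi$. Then $(\ph(x_n))$ is the image of the same real ray under $\ph\circ\psi$, which is again a quasi-isometry. The chain estimate in the proof of Proposition~\ref{aconninvariance} shows that it is an $(\alpha,K_1)$-chained sequence in $S'$, with $K_1=K_1(K,C)$ given by the formula there. Hence it is an $(\alpha,K_1)$-quasi-ray.

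\textbf{Step 2 (well-definedness).} Suppose $(x_n)\sim(y_n)$ as \akqray s. Fix $R'$ and set $R=CR'+C$. Then the tails of both rays outside $B(x_0,R)$ are \akch\ in $S\setminus B(x_0,R)$. Their images lie outside $B(x'_0,R')$, and the same estimate shows they are $(\alpha,K_1)$-chained there. So $(\ph(x_n))\sim(\ph(y_n))$.

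Thus $\ph$ induces maps $\calek(S)\to\mathcal{E}^\alpha_{K_1}(S')$. These commute with the inclusion maps $\calek\to\calekp$, because enlarging $K$ before or after applying $\ph$ gives ends contained in a common end at the larger constant. They therefore descend to a map $\Phi$ on the direct limit: if $E,E'$ lie in a common $F\in\mathcal{E}^\alpha_L$, then their images lie in the image of $F$ at level $L_1$.

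The key technical point, and what I expect to be the main obstacle, is that $K_1$ depends only on $K$ and $C$, not on $R$. The naive approach passes through the Lemma, which makes the constant grow like $R$; that would ruin uniformity over all $R$, which the definition of $\sim$ requires. To avoid this, I will apply the chain estimate from Proposition~\ref{aconninvariance} directly to chains lying outside large balls. That estimate uses only $\min\{d(x_i,x_0),d(x_{i+1},x_0)\}\ge 1$, so it is uniform in $R$.

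\textbf{Step 3 (inverse).} Define $\Psi$ from $\ph'$ in the same way. The composite $\Psi\Phi$ sends a quasi-ray $(x_n)$ to $(\ph'\ph(x_n))$, which is pointwise within distance $C$ of $(x_n)$. For $K''$ large in terms of $K$ and $C$, each $x_n$ is $(\alpha,K'')$-close to $\ph'\ph(x_n)$, since $C\le K''$. Hence, for every $R$, the tails outside $B(x_0,R+C)$ lie in the same $(\alpha,K'')$-component of $S\setminus B(x_0,R)$. Adjusting $R$ by $C$ costs nothing, because the equivalence is tested for all $R$. So the two rays define the same end at level $K''$, and therefore the same $\alpha$-end. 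Symmetrically $\Phi\Psi=\mathrm{id}$.

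One subtlety: $\ph'\ph(x_n)$ must lie in $S$ for this to make sense as a quasi-ray of $S$. I plan to handle this by replacing each point with the point $x_n\in S$ it approximates, so that the end is compared via $(x_n)$ itself. Alternatively, I can define $\Psi$ by choosing preimages in $S$ of points of $S'=\ph(S)$, which automatically stay within bounded distance.

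\textbf{Step 4 (continuity).} For continuity of $\Phi$ in the direct limit topology, it suffices to check that for each $K$ the level map $\calek(S)\to\mathcal{E}^\alpha_{K_1}(S')$ is continuous. By Step 2 it sends the basic open set determined by radius $CR'+C$ into the basic open set determined by $R'$. So preimages of basic open sets contain basic open sets around each of their points. The same argument applies to $\Psi$, so $\Phi$ is a homeomorphism.
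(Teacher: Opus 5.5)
Your proposal is correct and follows essentially the paper's route. The induced map comes from the chain estimate of Proposition~\ref{aconninvariance}, and bijectivity uses an inverse $\psi$ with $\psi(S')\subset S$; this is your preimage-choice definition of $\Psi$, which is the version you should commit to. Continuity comes from $\ph(S\setminus B(x_0,CR'+C))\subset S'\setminus B(x_0',R')$. Where you differ, you are more explicit than the paper: you check $\Psi\Phi=\mathrm{id}$ and $\Phi\Psi=\mathrm{id}$ instead of arguing injectivity plus symmetry, you verify compatibility with $\sim$ and with the direct limit, and you insist that the chaining constant be uniform in $R$.
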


\begin{proof}
We first show that $\ph$ induces a bijection of the set of \aend s. By
symmetry, it suffices to show that $\ph$ induces a map from \aend s of
$S$ to \aend s of $S'$ and then check that this map is one to one.

If $e$ is an \aend \ of $S$, then $e=[E]$, where $E \in \calek(S)$ for
some $K$. Then it follows from Proposition \ref{aconninvariance} that
$\ph$ takes $E$ to some $E' \in \calekp(S')$, where $K'$ is as given
in the proposition. So the induced map $\ph_*$ on \aend s takes $e \in
\caleinf(S)$ to $e' \in \caleinf(S')$, where $e' = [E']$.

Now suppose that $e_1$ and $e_2$ are \aend s of $S$ and
$\ph_*(e_1)=\ph_*(e_2)=e'$. Let $\psi: (X',x'_0,d') \to (X,x_0,d)$ be
an inverse quasi-isometry of $\ph$ with $\psi(S') \subset S$ and
$\psi(S')$ $K_0$-dense in $S$, and let $e= \psi_*(e')$. If $e_1 =
[E_{K_1}]$ and $e_2 = [E_{K_2}]$, and we take $E_K$ with $e = [E_K]$
and $K \geq \max(K_1,K_2)$, then $E_{K_1}$ and $E_{K_2}$ are contained
in $E_K$ so we have that $e_1 = e_2$. So $\ph_*$ is a bijection on the
set of \aend s.

It remains to prove that $\ph_*$ is a homeomorphism. By symmetry, it
suffices to show that $\ph_*$ is continuous. We show that $\ph_*$ is
continuous at every \aend \ $e$ of $S$.

Let $e=[E]$, where $E \in \calek(S)$ for some $K$. Let $e'=\ph_*(e)
=[E']$, where $E' \in \calekp(S')$ for $K' \geq K'_0$, where $K'_0$ is
the constant given in Proposition \ref{aconninvariance}.

A basic neighborhood $V$ of the \aend\ $e'$ corresponds to the
unbounded \akpconn\ component of $S' \setminus B(x'_0, R')$ containing
$e'$ for some $R'$. We find a basic neighborhood $U$ of $e$ mapped by
$\ph$ into $V$. Let $R=CR'+C$, where $C$ is the constant in
Proposition \ref{aconninvariance}. Then $\ph(X \setminus B(x_0, R))
\subset X' \setminus B(x'_0, R')$ since $\ph(B(x_0, R)) \supset
B(x'_0, R')$. So $\ph(S \setminus B(x_0, R)) \subset S' \setminus
B(x'_0, R')$. Let $U$ be the basic neighborhood of $e$ corresponding
to the unbounded \akconn\ component of $S \setminus B(x_0, R)$
containing $e$. Then $\ph(U)$ is an \akpconn\ subset of $S' \setminus
B(x'_0, R')$, and it contains $e'$, so it is contained in $V$.
\end{proof}

\begin{corollary}
The space of \aend s is invariant under change of basepoint.
\end{corollary}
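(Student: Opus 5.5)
The plan is to deduce the corollary directly from Proposition~\ref{aendinvariance}, in the same way the earlier Corollary on \aconn ness was deduced from Proposition~\ref{aconninvariance}. Fix a subset $S$ of a metric space $(X,d)$ and two base points $x_0,y_0\in X$. Define $\ph:(X,x_0,d)\to(X,y_0,d)$ by $\ph(x_0)=y_0$, $\ph(y_0)=x_0$ and $\ph(x)=x$ otherwise. One could instead take the identity map of $X$, regarded as a map of pointed spaces; it is an isometry and obviously a quasi-isometry. The only catch is whether the notion of a ``quasi-isometry of pointed metric spaces'' in Proposition~\ref{aendinvariance} requires base points to be sent to base points. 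To avoid that question, I will use the swapping map above, which has the same form as the map in the earlier Corollary.

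First I check that $\ph$ is a quasi-isometry. Put $C=d(x_0,y_0)$. Each point moves by at most $C$, so $|d(\ph(x),\ph(x'))-d(x,x')|\le 2C$ for all $x,x'$. Hence $\ph$ is a $(1,2C)$-quasi-isometric embedding. It is also surjective, so the $A$-density condition holds trivially. Next I apply Proposition~\ref{aendinvariance} with $X'=X$, $x_0'=y_0$ and $S'=\ph(S)$. This gives a homeomorphism between $\caleinf(S)$ computed relative to $x_0$ and $\caleinf(\ph(S))$ computed relative to $y_0$.

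The main obstacle is that $\ph(S)$ need not equal $S$. This happens when $S$ contains exactly one of $x_0,y_0$. To handle it, I will show that changing a set by finitely many points, or more generally inside a bounded ball, does not change its \aend s. Take $R>C$. Then $S\setminus B(y_0,R)$ and $\ph(S)\setminus B(y_0,R)$ coincide, since $\ph$ is the identity off $\{x_0,y_0\}$. The \ak-components of $S\setminus B(y_0,R)$ used in the basic open sets are therefore identical for the two sets, for every $R>C$. These large $R$ are cofinal, and ends and basic neighborhoods depend only on the behavior outside arbitrarily large balls: an \akqray\ in $S$ differs from one in $\ph(S)$ in at most finitely many terms. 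So the identity gives a homeomorphism $\caleinf(S)\cong\caleinf(\ph(S))$, both relative to $y_0$.

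Alternatively, I could simply take $\ph=\mathrm{id}_X$. Its behavior on base points is irrelevant to the proof of Proposition~\ref{aendinvariance}, which uses only the constant $C$ of Proposition~\ref{aconninvariance}; that constant is absorbed by the earlier Corollary on change of base point for \aconn ness. Then $S'=S$ and the result is immediate. Either way, composing the homeomorphisms yields $\caleinf(S)$ relative to $x_0$ homeomorphic to $\caleinf(S)$ relative to $y_0$.
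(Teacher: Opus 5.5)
Your proposal is correct and follows the paper's route: the corollary is obtained, as in the earlier corollary on \aconn ness, by applying Proposition~\ref{aendinvariance} to a quasi-isometry $(X,x_0)\to(X,y_0)$ that only moves the base point. The paper uses the map $x_0\mapsto y_0$ fixing all other points, and your swap map works equally well. Your extra comparison of $S$ with $\ph(S)$ handles a point the paper passes over silently. Note that swapping a term of a quasi-ray inside $B(y_0,R)$ may force a larger $K$; this is harmless because \aend s are defined as a direct limit over $K$.
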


\begin{remark}
In light of Proposition \ref{aendinvariance}, for $\alpha \ge 0$ we
can define the space of \aend s of a quasi-sparse space as the space
of \aend s of any of its sparse skeletons.
\end{remark}

\begin{remark}
Our $0$-connectedness is just coarse connectedness (e.g.~\cite{CH}), i.e.
Gromov's \cite{G} long-range connectedness. Our 0-ends coincide with the
coarse ends of Alvarez Lopez and Candel \cite{AC}.
\end{remark}

\section{Examples}\label{examples}

Let us provide some examples.

\begin{example}\label{ex1}
Let $X$ be a subset of $[0,\infty)$, consisting of points $n^q$, where
  $q\ge1$ is a constant, and $n=0,1,2,\dots$. Set $x_0=0$ as the base
  point. Fix $\alpha\in[0,1]$ and $K>0$. If $n>0$, then $n^q$ and
  $(n+1)^q$ are $(\alpha,K)$-close if and only if
\begin{equation}\label{e1}
(n+1)^q-n^q\le Kn^{q\alpha}+K.
\end{equation}
We have
\begin{equation*}
\begin{split}
\lim_{x\to\infty}\frac{(x+1)^q-x^q}{x^{q-1}}&=
\lim_{x\to\infty}\frac{(1+\frac1x)^q-1}{\frac1x}=
\lim_{y\to0}\frac{(1+y)^q-1}{y}\\&=
\frac{d}{dy}\big((1+y)^q\big)\Big|_{y=0}=
q(1+y)^{q-1}\Big|_{y=0}=q.
\end{split}
\end{equation*}

Therefore,~\eqref{e1} holds for large $K$ and $n$ if and only if
$q-1\le q\alpha$, that is, if and only if $\alpha\ge(q-1)/q$. We see
easily that $X$ has no $\alpha$-ends if $\alpha<(q-1)/q$ and has one
$\alpha$-end if $\alpha\ge(q-1)/q$.

In particular, if we take $q=2$ we see that the set of squares
$X=\{n^2:n=0,1,2,3,\dots\}$ has one $\alpha$-end if $\alpha \ge 1/2$
and no $\alpha$-ends if $\alpha < 1/2$. \ee

\begin{remark}
White and Willett (\cite{WW}, Example 3.3) describe a Cartan subalgebra of
the uniform Roe algebra of the set of squares (or any sparse set of
nonnegative integers).

\end{remark}
\begin{example}\label{ex2}
Let us modify the preceding example. Namely, instead of the half-line
(which now can be viewed as one ray starting at the origin in $\R^2$)
we take $M$ rays starting at the origin. We set the origin as the base
point $x_0$. On the $m$th ray we take as elements of $X$ points whose
distance from $x_0$ is $n^{q_m}$, where $n=0,1,2,\dots$. Here
$q_1,\dots,q_M$ are constants larger than or equal to 1.

If $K$ and $\alpha$ are fixed and $R$ is large enough, no points from
different rays outside $B(x_0,R)$ are $(\alpha,K)$-close. Therefore,
by the same arguments as in Example~\ref{ex1}, we see that the number
of $\alpha$-ends of $X$ is equal to the number of those $m$ for which
$(q_m-1)/q_m\le\alpha$.
\ee

\begin{example}\label{ex3}
We can modify our example even further. Namely, instead of finitely
many rays, we can take infinitely many, such that in the intersection
of their union with the unit circle every point is isolated. Then
clearly the number of $\alpha$-ends of $X$ is at least the number of
those $m$ for which $(q_m-1)/q_m\le\alpha$.
\ee

\begin{example}\label{ex4}
Take points $z_m=(\cos(1/m),\sin(1/m))$ on the unit circle in $\R^2$,
where $m=1,2,3,\dots$. Define $X=\{n^{q_m}z_m:m,n=1,2,3,\dots\} \cup
\{(0,0)\}$ for some sequence $(q_m), q_m >1$. This is the situation
from the preceding example, and we see $X$ has at least zero $0$-ends,
which gives us no information. When we look along the rays, we cannot
get a better estimate. Yet we can define the sequence $(q_m)$ in such
a way that $X$ will have at least one $0$-end.

We do it in the following way. We partition the sequence $(1/m)$ into
infinitely many subsequences. Then we take a bijection from the set of
subsequences onto the set of natural numbers larger than 1. For each
subsequence $(y_k)$ we take numbers $a_k$ such that $2^{a_k}$
converges to the natural number corresponding to the subsequence
$(y_k)$. In such a way the set $\{(n,0):n=2,3,4,\dots\}$ belongs to
the closure of $X$. Therefore, $X$ has at least one $0$-end, which we
see when going along this set to infinity. This is a subset of the
horizontal ray, but no points of this ray belong to $X$, so we cannot
use Example~\ref{ex3} to see this end.
\ee

\begin{example}\label{ex5}
Denote the space from Example~\ref{ex1} by $X_q$. Now take the set
$X=X_{q_1}\times X_{q_2}$ for some $q_1,q_2\ge1$. If
$\alpha\ge(q_i-1)/q_i$ for $i=1,2$, then $X$ is $\alpha$-connected, so
it has 1 $\alpha$-end. If $\alpha\ge(q_1-1)/q_1$, but
$\alpha<(q_2-1)/q_2$ (or vice versa), then we can view $X$ as the
disjoint union of spaces isometric to $X_{q_1}$ (or $X_{q_2}$) with no
connection between them. This means that $X$ has infinitely
(countably) many $\alpha$-ends. Finally, if $\alpha<(q_i-1)/q_i$ for
$i=1,2$, then $X$ has no $\alpha$-ends.
\ee

\begin{example}\label{exfan}
As in Example~\ref{ex4}, we take points $z_m=(\cos(1/m),\sin(1/m))$ on
the unit circle in $\R^2$, where $m=1,2,3,\dots$, but now also take
$z_\infty=(1,0)$. Now we define $X=\{nz_m | m \le \infty,
n=0,1,2,3,\dots\}$. Then $X$ is coarsely connected (in fact
(0,1)-connected), and its space of 0-ends is its space of (0,1)-ends,
which is homeomorphic to the subspace $Z=\{z_m | m \le \infty\}$ of
the circle.
\ee

\begin{example}\label{exsparsefan}
As in Example~\ref{exfan}, we take $Z=\{z_m | m \le \infty\}$ where
$z_m= \break (\cos(1/m),\sin(1/m))$ on the unit circle in $\R^2$ for
$m=1,2,3,\dots$ and $z_\infty=(1,0)$. Let $X=\{nmz_m | m =1,2,3,
\dots, n=0,1,2,\dots\} \cup \{nz_\infty | n=0,1,2,\dots\}$. Then $X$
is coarsely connected (though not (0,K)-connected for any K), and, in
contrast to the previous example, its space of 0-ends is a countable
discrete space, which can be thought of as $Z$ but with the discrete
topology.
\ee

\end{document}